\pgfplotsset{compat=1.18}
\newcommand{\seqnum}[1]{\href{https://oeis.org/#1}{\rm \underline{#1}}}
\newcommand{\NN}{\ensuremath{\mathbb N}}
\newcommand{\ZZ}{\ensuremath{\mathbb Z}}
\newcommand{\RR}{\ensuremath{\mathbb R}}
\newcommand{\cR}{\ensuremath{{\cal R}}}
\newcommand{\cT}{\ensuremath{{\cal T}}}
\newcommand{\cL}{\ensuremath{{\cal L}}}
\newcommand{\cX}{\ensuremath{{\cal X}}}
\renewcommand{\vec}[1]{\overline{\mathbf #1}}
\def\localbig#1#2{%
  \sbox\z@{$#1
    \sbox\tw@{$#1()$}%
    \dimen@=\ht\tw@\advance\dimen@\dp\tw@
    \nulldelimiterspace\z@\left#2\vcenter to1.2\dimen@{}\right.
    $}\box\z@}
\newtheorem{theorem}{Theorem} 
\newtheorem*{theorem*}{Theorem}
\newtheorem{lemma}[theorem]{Lemma}
\newtheorem{question}[theorem]{Question}
\newtheorem{corollary}[theorem]{Corollary}
\theoremstyle{definition}
\newtheorem{definition}[theorem]{Definition}
\newtheorem{conjecture}[theorem]{Conjecture}
\DeclareMathOperator{\diam}{diam}
\DeclareMathOperator{\sep}{sep}
\begin{document}
    \title{On Nathanson's Triangular Number Phenomenon}
    \author{Kevin O'Bryant\footnote{On faculty at the City University of New York, both at the College of Staten Island campus and The Graduate Center campus.}}
    \date{ \today }
    \maketitle

\begin{abstract}
For a finite set $A\subseteq \mathbb{Z}$, the $h$-fold sumset is $hA \coloneqq \{x_1+\dots+x_h:x_i\in A\}$. We interpret the beginning of the sequence of sumset sizes $(|hA|)_{h=1}^\infty$ in terms of the successive $L^1$-minima of a lattice (specifically, the points in $\ZZ^{|A|}$ whose coordinates sum to 0 and which are perpendicular to $\langle a_1,\dots,a_{|A|}\rangle$). In particular, if $h_1,h_2$ are the first and second minima, and $1\le h<h_1$, then $|hA|=\binom{h+|A|-1}{|A|-1}$, while if $h_1\le h <h_2$, then $|hA|=\binom{h+|A|-1}{|A|-1}-\binom{h-h_1+|A|-1}{|A|-1}$.

This explains the appearance of triangular numbers in the sequence of sumset sizes, an observation related to a recent experiment of Nathanson.
\end{abstract}

\section{Introduction}
The $h$-fold sumset of a set $A$ of $k$ integers is $hA \coloneqq \{x_1+\dots+x_h:x_i\in A\}$. By elementary counting, its size satisfies both
\[|hA| \le h \cdot \diam(A) + 1,\qquad \text{and} \qquad |hA| \le  \binom{h+k-1}{k-1} .\]
By Nathanson's Theorem~\cite{nathanson1972sumsoffinitesetsofintegers}, for sufficiently large $h$ we have $|hA|=\diam(A) \cdot h + C_0$ for some fixed $C_0\le 1$. For sufficiently small $h$, we have $|hA| = \binom{h+k-1}{k-1}$. In this work, we address the meaning of ``sufficiently small'', and give an explicit formula for $|hA|$ for $h$ that is slightly larger.

For example, consider $4$-element sets. As there are $\binom{h+3}{3}$ unordered $h$-tuples of elements of $A$, the quantity $\binom{h+3}{3}-|hA|$ counts the ``missing'' sums. Consider specifically $A=\{0, 2, 18, 25\}$ and the following table.
\[
\begin{array}{r|cccccccccccc}
h & 1 & 2 & 3 & 4 & 5 & 6 & 7 & 8 & 9 & 10 & 11 & 12 \\ \hline
|hA| & 4 & 10 & 20 & 34 & 52 & 74 & 100 & 130 & 162 & 193 & 222 & 249 \\
\binom{h+3}{3} - |hA| & 0 & 0 & 0 & 1 & 4 & 10 & 20 & 35 & 58 & 93 & 142 & 206 \\
\text{first differences}& & 0 & 0 & 1 & 3 & 6  & 10 & 15 & 23 & 35 & 49 & 64
\end{array}
\]
The appearance of the triangular numbers (\seqnum{A000217}) in the start of the bottom row was surprising, and the main theorem of this work explains its appearance. Namely, we prove that the second row of such a table begins with tetrahedral numbers (\seqnum{A000292}). The string of numbers in the first row, which are of the form $2(h^2+1)$, is also interesting, and is explained below in Corollary~\ref{cor:special poly}. In Section~\ref{sec:NathansonExperiment} below, we repeat a related experiment of Nathanson~\cite{nathanson2025triangulartetrahedralnumberdifferences} in which the triangular numbers appear. Indeed, this work was directly inspired by that work.

\subsection{Notation and Conventions}
We use $\NN$ for the positive integers, and $\NN_0$ for the set $\{0\}\cup \NN$. For each finite set $A\subseteq\RR$, we denote its elements as $a_1<\cdots<a_k$, so that $|A|=k$ and $\diam(A) = a_k-a_1$.
We use the convention that binomial coefficients are zero if their top argument is smaller than their bottom argument, and the convention that if $X$ is a set and $k$ a nonnegative integer, then $\binom{X}{k}$ is the set of subsets of $X$ with cardinality $k$.

The sumset $hA$ is the set $\{b_1+\cdots+b_h : b_i \in A\}$. Recursively, $1A\coloneqq A$ and $hA \coloneqq \{s+b:s \in (h-1)A,b\in A\}$. The size of $hA$ is invariant under translation and dilation of $A$, often enabling us to assume without loss of generality that $\min A=0$ and that $\gcd(A)=1$.

Let
\[\cX_{t,k} \coloneqq \left\{ \langle c_1,\dots,c_k\rangle : c_i \in \NN_0, c_1+\dots+c_k=t\right\}.\]
With $\vec a \coloneq \langle a_1,a_2,\dots,a_k\rangle$, we have
\[ hA = \left\{ \vec x \cdot \vec a : \vec x \in \cX_{h,k} \right\}.\]
By stars-and-bars,
\[ |\cX_{t,k}| = \binom{t+k-1}{k-1}.\]

For each set $A\subseteq\ZZ$, we let $\cL$ be the $(k-2)$-dimensional lattice in $\ZZ^k$ of vectors that are perpendicular to both $\vec 1$ (the vector of all $1$'s) and $\vec a$. This lattice, which we call \emph{the coefficient lattice of $A$}, has dimension $k-2$, and consequently for $k\ge 4$ this dimension is at least $2$. The entries of an element of $\cL$ sum to $0$ and are integers, so the sum of their absolute values is an even integer.

We define the \emph{$i$-th successive $L^1$-minima} $\lambda_i$ of $\cL$ to be the infimum of $\lambda$ such that $\{\vec c : \|\vec c\|_1 \le \lambda\}$ contains $i$ linearly independent vectors. We let $\vec{y_1},\vec{y_2},\dots,\vec{y_{k-2}}$ be (the nonuniquely defined) linearly independent vectors in $\cL$ with $\|\vec{y_i}\|_1=\lambda_i$, and call them \emph{minimizers}.

For the coefficient lattices we consider, $\lambda_i$ must be an even integer. We define $h_1,h_2$ so that $\lambda_1=2h_1,\lambda_2=2h_2$ are the first and second minima of the lattice with respect to the $L^1$-norm. That is, every nonzero vector in $\cL$ has $L^1$-norm at least $2h_1$, there is a first minimizer $\vec y_1 \in \cL$ with $\|y_1\|_1=2h_1$, and every vector in $\cL$ that is linearly independent of $\vec y$ has $L^1$-norm at least $2h_2$.

By Lemma~\ref{lem:all minima happen}, every pair of even numbers $2h_1,2h_2$ with $2\le h_1\le h_2$ arises as the successive minima of such a lattice, even with $k=4$. The proof of Lemma~\ref{lem:all minima happen} is delayed to \S\ref{subsec:they happen}.

\begin{lemma}\label{lem:all minima happen}
For every integer $k\ge 4$ and every pair of integers $2\le h_1\le h_2$, there is a $k$-element set $A$ whose coefficient lattice $\cL$ has first $L^1$-minimum $2h_1$ and second $L^1$-minimum $2h_2$.
\end{lemma}

We conjecture that for every $1\le h_1\le h_2\le \cdots \le h_{k-2}$, there is a set $A$ of $k$ integers whose coefficient lattice has successive $L^1$-minima $2h_1,\dots,2h_{k-2}$.

\clearpage
\subsection{Theorems and Corollaries}

Our Main Theorem connects the minima of the coefficient lattice $\cL$ to the sequence of sumset sizes.
\begin{theorem}\label{thm:main}
Let $A=\{a_1,\dots,a_k\}$ be a set of $k\ge 4$ integers. Let $\cL$ be the sublattice of $\ZZ^k$ that is perpendicular to both $\vec 1$ and $\vec a=\langle a_1,\dots,a_k\rangle$. Let $2h_1,2h_2$ be the first and second minima of $\cL$ with respect to the $L^1$-norm.
If $1\le h<h_1$, then
\begin{equation}\label{eq:B_h case} |hA| = \binom{h+k-1}{k-1}.\end{equation}
If $h_1 \le h < h_2$, then
\begin{equation} |hA| = \binom{h+k-1}{k-1} - \binom{h-h_1+k-1}{k-1}.\end{equation}
\end{theorem}

If $h<h_1$, then $|hA| = \binom{h+k-1}{k-1}$. That is, $A$ is a $B_h$-set. Further, for $h> h_1$
\begin{multline*}
\left(\binom{h+k-1}{k-1} - |hA|\right) - \left(\binom{(h-1)+k-1}{k-1} - |(h-1)A|\right)\\
 = \binom{h-h_1+k-1}{k-1} - \binom{(h-1)-h_1+k-1}{k-1} = \binom{h-h_1+k-2}{k-2}.
\end{multline*}
If $k=4$, with $t=h-h_1+2$, this is $\binom{t}{2}$, the triangular numbers seen in the bottom row of the table above. We comment that with $k=4$, both $\binom{h+k-1}{k-1}$ and $\binom{h-h_1+k-1}{k-1}$ are tetrahedral numbers~\seqnum{A000292}.

This tells us that the sequence of sumset sizes of a set of $k$ integers is given exactly by a degree $k-1$ polynomial for the first several $h$, and exactly by a degree $k-2$ polynomial for the next several $h$. The author suspects, but has not proven, that if $\lambda_{i-1}\le h < \lambda_i$, then $|hA|$ is ``essentially'' a degree $k-i$ polynomial whose coefficients are determined by the first $i-1$ minimizers. The vague ``essentially'' qualifier is necessary to handle congruence issues that can arise for larger $i$, as seen in the Frobenius Problem.

Returning to the example above, $A=\{0, 2, 18, 25\}$, examining the coefficient lattice $\cL$ we find that $h_1=4$ and $h_2=9$. By our theorem, $|hA| = \binom{h+3}{3}$ for $1\le h <4$, and $|hA| = \binom{h+3}{3}-\binom{h-1}{3}=2(h^2+1)$ for $4\le h <9$. That $|hA|=2(h^2+1)$ also for $1\le h \le 3$, is a coincidence arising from $\binom{h+3}{3}=2(h^2+1)$ for $1\le h \le 3$. The set $A=\{0,1,3b+1,3b+4\}$ gives us the pleasant Corollary~\ref{cor:special poly}.

\begin{corollary}\label{cor:special poly}
For all $b>1$, there is a set $A$ of integers with $|hA|=2(h^2+1)$ for $1\le h \le b$.
\end{corollary}

Call a polynomial $p(h)$ \emph{cute} if for all $b>1$, there is a set $A$ of integers with $|hA|=p(h)$ for $1\le h \le b$. We are unable to enumerate the cute polynomials.

In~\cite{nathanson2025triangulartetrahedralnumberdifferences}, Nathonson lists a number of problems concerning
  \[\cR_{\ZZ}(h,k) \coloneqq \{ |hA| : A\subseteq \ZZ, |A|=k\}.\]
Problem 8 of his program is solved by Corollary~\ref{cor:they all happen} to Theorem~\ref{thm:main}, whose proof gives an explicit set $A$ for each of the claimed sizes of $|hA|$.
\begin{corollary}\label{cor:they all happen}
Let $h$ be a positive integer. Then
\[\left\{ \binom{h+k-1}{k-1}-\binom{h+k-j}{k-1} : 1< j \le h\right\} \subseteq \cR_{\ZZ}(h,k).\]
\end{corollary}

While of a different flavor, the proof of the following result has some conceptual and stylistic overlap with our proof of Theorem~\ref{thm:main}. In~\cite{nathanson2025compressioncomplexitysumsetsizes}, Nathanson writes ``it is not known if $\cR_{\NN_0}(h,k)= \cR_{\RR}(h,k)$.'' As $\cR_{\NN_0}(h,k)=\cR_{\NN}(h,k)$ trivially, the first equality in our Theorem~\ref{thm:NvsR} fills this gap in the literature. Let $A^h \coloneqq \{a_1\cdots a_h : a_i \in A\}$ be the $h$-fold product set.

\begin{theorem}\label{thm:NvsR}
We have
\begin{multline*}
\left\{ |hA| : |A|=k,A\subseteq \NN \right\}
=\left\{ |hA| : |A|=k,A\subseteq \RR \right\}\\
=\left\{ |A^h| : |A|=k,A\subseteq \NN \right\}
=\left\{ |A^h| : |A|=k,A\subseteq \RR \right\}.
\end{multline*}
\end{theorem}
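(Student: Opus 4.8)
The plan is to establish all four equalities at once by combining a real-to-integer \emph{realization} statement on the additive side with an exponential/logarithmic \emph{bridge} to the multiplicative side. Fix $h$ and $k$, and recall from the introduction that, with $\vec a=\langle a_1,\dots,a_k\rangle$, one has $hA=\{\vec x\cdot\vec a:\vec x\in\cX_{h,k}\}$. Thus $|hA|$ is the number of distinct values of $\vec x\cdot\vec a$ over $\cX_{h,k}$, i.e. $|hA|=|\cX_{h,k}/\!\sim_A|$ where $\vec c\sim_A\vec c'$ exactly when $(\vec c-\vec c')\cdot\vec a=0$. Since $\vec c,\vec c'\in\cX_{h,k}$ have equal coordinate sums, $\vec c-\vec c'$ is perpendicular to $\vec 1$, so $\vec c\sim_A\vec c'$ precisely when $\vec c-\vec c'\in\cL$. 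Hence $|hA|$ depends only on the finite set $D_0\coloneqq D\cap\cL$, where $D\coloneqq\{\vec c-\vec c':\vec c,\vec c'\in\cX_{h,k}\}$ is the (finite) difference set; this is the point of contact with Theorem~\ref{thm:main}.

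For the additive inclusion $\{|hA|:A\subseteq\RR\}\subseteq\{|hA|:A\subseteq\NN\}$, I would start from a real $A$ and look for an integer $A'$ with the same collision pattern. Split $D=D_0\sqcup D_1$ according to whether $\vec v\cdot\vec a=0$, and set $V\coloneqq\{\vec x\in\RR^k:\vec v\cdot\vec x=0\text{ for all }\vec v\in D_0\}$. Because $D_0\subseteq\ZZ^k$, the subspace $V$ is \emph{rational} and contains $\vec a$; because each $\vec v\in D_1$ has $\vec v\cdot\vec a\neq0$, the point $\vec a$ avoids the finitely many proper subspaces $V\cap\{\vec x:\vec v\cdot\vec x=0\}$. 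As $V\cap\QQ^k$ is dense in $V$, I can pick a rational $\vec a'\in V$ off all of these, so $\vec a'\cdot\vec v=0$ for $\vec v\in D_0$ and $\vec a'\cdot\vec v\neq0$ for $\vec v\in D_1$; this reproduces $D_0$ and hence $|hA'|=|hA|$. Clearing denominators (dilation invariance) makes $\vec a'$ integral, and adding a large constant to every coordinate (translation invariance, legitimate since every $\vec v\in D$ is perpendicular to $\vec 1$) makes all entries positive. Distinctness survives because $e_i-e_j\in D_1$ for every $h\ge1$ (it is a difference in $\cX_{h,k}$ and satisfies $(e_i-e_j)\cdot\vec a=a_i-a_j\neq0$), so $a'_i\neq a'_j$. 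This yields $A'\subseteq\NN$ with $|A'|=k$ and $|hA'|=|hA|$; the reverse inclusion is immediate from $\NN\subseteq\RR$.

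For the bridge, I would use that $\log$ is an injective homomorphism $(\RR_{>0},\times)\to(\RR,+)$. For positive $A$, $\log(A^h)=h(\log A)$, so $|A^h|=|h(\log A)|$ with $\log A$ a set of $k$ distinct reals, and $\exp$ reverses this; hence the multiplicative positive-real family equals the additive real family. To reach $\NN$ multiplicatively, fix an integer base $g\ge2$: the map $b\mapsto g^{b}$ sends a $k$-element $B\subseteq\NN$ to $g^{B}\coloneqq\{g^{b}:b\in B\}\subseteq\NN$ with $|(g^{B})^{h}|=|hB|$, giving $\{|hB|:B\subseteq\NN\}\subseteq\{|A^h|:A\subseteq\NN\}$, while $\log$ sends a multiplicative $A\subseteq\NN$ to a real additive set, giving the reverse through the realization step already established. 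Chaining these identifications closes the loop among all four families.

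The main obstacle is the additive realization step, specifically the demand that a \emph{single} rational point satisfy all the equalities cutting out $V$ while violating every one of the finitely many inequalities indexed by $D_1$; this is exactly where rationality of $V$ (forced by $D_0\subseteq\ZZ^k$) together with density of $\QQ^k$ in $V$ does the work. A second point needing care is positivity in the multiplicative real case: products of \emph{negative} reals can collapse $|A^h|$ below the additive minimum $h(k-1)+1$ (for instance $\{\pm1,\pm c\}$ with $k=4$ gives $|A^h|=2(h+1)<3h+1$ once $h\ge2$), so the multiplicative real family should be taken inside $\RR_{>0}$, for which the logarithm bridge applies cleanly.
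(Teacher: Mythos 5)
Your proposal is correct, but it takes a genuinely different route from the paper. The paper never writes a self-contained proof of this statement; it follows by combining two later results, both proved by simultaneous Diophantine approximation. Theorem~\ref{thm:NvsR2} shows $\cT_{\NN}(h,k)=\cT_{\RR}(h,k)$: a pigeonhole argument on fractional parts of dilations produces a scale $Q$ such that the nearest-integer set $a_i=[Qx_i]$ inherits every collision of $X$, and a separation estimate $1/Q\le\sep_h(X)/2$ guarantees it acquires no new ones; a companion theorem gives $\cT_{\NN}(\sigma,k)=\cT_{\NN}(\pi,k)$ via $s\mapsto 2^s$ in one direction and the same approximation applied to $\{c\log_2 p\}$ in the other. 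Your realization step replaces this analytic argument with a linear-algebraic one: the collision pattern of $\vec a$ on $\cX_{h,k}$ is determined by the set $D_0$ of integer difference vectors it annihilates, the subspace $V$ cut out by $D_0$ is rational, and a rational point of $V$ avoiding the finitely many proper subspaces indexed by $D_1$ reproduces the pattern exactly, after which dilation and translation invariance land you in $\NN$. Both arguments preserve the full equivalence relation rather than merely the count $|hA|$, so they are of equal strength here; the paper's approach buys an effective construction (the explicit bound $q_0\le Q\le q_0(k-2)^{2h}$ controls the size of the integer set produced), while yours is shorter, needs no separation estimates, and generalizes verbatim to any rational linear form in place of $x_1+\cdots+x_h$.

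One point in your write-up deserves more emphasis than you give it: your remark about negative reals is not a ``point needing care'' but an outright counterexample to the statement as literally written. For $A=\{-c,-1,1,c\}$ with $c>1$, every element of $A^h$ is $\pm c^j$ with $0\le j\le h$, both signs occur for each $j$, and so $|A^h|=2h+2$; meanwhile every member of the additive families is at least $h(k-1)+1=3h+1$, so for $h\ge 2$ the value $2h+2$ lies in the fourth set and in none of the other three. Thus the fourth family must be read as ranging over $A\subseteq\RR_{>0}$, which is exactly the restriction under which your logarithm bridge applies. The paper never makes this restriction, and its own arguments---which only ever touch the first three families---cannot reveal that it is needed, so your proposal in fact repairs the theorem rather than merely proving it.
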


\section{Proof of Theorem~\ref{thm:main} and Corollaries}

\begin{proof}[Proof of Theorem~\ref{thm:main}]
Suppose that we have distinct $\vec {c_1},\vec{c_2}\in \cX_{h,k}$ with $\vec {c_1}\cdot \vec a = \vec {c_2}\cdot \vec a$. Then $\vec{c_1}-\vec{c_2} \in \cL$, and
  \begin{equation}\label{eq:triangle}
  2h_1 \le \| \vec{c_1}-\vec{c_2} \|_1 \le \|\vec {c_1} \|_1 + \|\vec {c_2}\|_1=2h.
  \end{equation}
If $h<h_1$, this is impossible, and so $|hA|=|\cX_{h,k}|=\binom{h+k-1}{k-1}$, as claimed.

We now assume that $h_1 \le h < h_2$, and take $y\in\cL$ with $\| y \|_1=2h_1$.

The $h$-type of $A$ is the equivalence relation on $\cX_{h,k}$ defined by $\vec {c_1} \equiv \vec{c_2}$ if $\vec{c_1} \cdot \vec a = \vec{c_2}\cdot \vec a$. The number of classes in the $h$-type of $A$ is $|hA|$.

We define a second equivalence relation on $\cX_{h,k}$ by taking $\vec{c_1} \cong \vec{c_2}$ if $\vec{c_1}-\vec{c_2}$ is a multiple of $\vec y$. We will first prove that the two equivalence relations $\cong,\equiv$ are identical, and then that the $\cong$ equivalence relation has $\binom{h+k-1}{k-1} - \binom{h-h_1+k-1}{k-1}$ classes.

First, suppose that $\vec{c_1} \equiv \vec{c_2}$ and $\vec{c_1}\neq \vec{c_2}$. Then $(\vec{c_1}-\vec{c_2})\cdot \vec a = 0$, and $(\vec{c_1}-\vec{c_2})\cdot \vec 1=\vec{c_1}\cdot\vec 1 - \vec{c_2}\cdot \vec 1= h-h =0$, so that $\vec{c_1}-\vec{c_2}\in\cL$. But since $\| \vec{c_1}-\vec{c_2}\|=2h$, and $2h<2h_2$, it must be that $\vec{c_1}-\vec{c_2}=\alpha\vec y$ for some $\alpha\in\ZZ$. Ergo, $\vec{c_1} \cong \vec{c_2}$.

Now, suppose that $\vec{c_1} \cong \vec{c_2}$ and $\vec{c_1}\neq \vec{c_2}$. Then $\vec{c_1}-\vec{c_2} = \alpha \vec y $.  Consequently, $(\vec{c_1}-\vec{c_2})\cdot \vec a = \alpha \vec y \cdot \vec a = 0$, since $\vec y \in \cL$. Thus, $\vec{c_1} \equiv \vec{c_2}$.

It remains only to count the $\cong$-classes. Each class is finite, so has a representative $\vec{c}$ with $\vec{c}-\vec{y}$ \emph{not} in the class. Every vector in the class with $\vec{c}$ has the form $\vec{c}+i\vec{y}$, and we let $\alpha$ be the maximal such $i$. The classes are convex, being the intersection of a space with the convex $\cX_{h,k}$, so the entire class is $\vec{c},\vec{c}+\vec y, \vec{c}+2\vec{y}, \dots,\vec{c}+\alpha \vec y$ for some nonnegative integer $\alpha$. Each class with $\alpha+1$ elements has exactly $\alpha$ solutions to $\vec{c_2}-\vec{c_1}=\vec{y}$. It follows\footnote{This is identical to the theorem that for a forest $G=(V,E)$, the number of connected components is $|V|-|E|$.} that the number of classes is
  \[|\cX_{h,k}|-\#\left\{(\vec{c_1},\vec{c_2}) \in \cX_{h,k}^2 : \vec{c_2}-\vec{c_1}=\vec y\right\}.\]

We now remains to count the number of $\vec{c_1},\vec{c_2} \in \cX_{h,k}$ with $\vec{c_2}-\vec{c_1}=\vec{y}$.
Let $\vec y = \langle y_1,\dots,y_k\rangle$, and define
\[\vec u = \langle u_1,\dots,u_k\rangle, u_i \coloneqq \begin{cases} 0, & y_i \le 0; \\ y_i, & y_i >0;\end{cases}
\qquad \text{ and } \qquad
\vec v = \langle v_1,\dots,v_k\rangle, v_i \coloneqq \begin{cases} -y_i, & y_i \le 0; \\ 0, & y_i >0.\end{cases}\]
We have $\|\vec u \|_1 = \|\vec v \|_1 = h_1$, since $\vec y \cdot \vec 1 =0$, and $\vec y = \vec u - \vec v$.
Every solution to $\vec{c_2}-\vec{c_1}=\vec y$ corresponds to a solution to $\vec{c_2}-\vec{u}=\vec{c_1}-\vec{v}$, and vice versa. Now consider the $i$-th coordinate of $\vec r \coloneqq \vec{c_2}-\vec{u}$. If $y_i\le 0$, then $u_i =0$, and so
\[r_i=(\vec{c_2}-\vec{u})_i=(\vec{c_2})_i-y_i=(\vec{c_2})_i,\]
which is nonnegative since $\vec{c_2}\in \cX_{h,k}$. On the other hand, if $y_i>0$, then $v_i=0$ and \[r_i=(\vec{c_2}-\vec{u})_i=(\vec{c_1}-\vec{v})_i=(\vec{c_1})_i-v_i=(\vec{c_1})_i,\] which is nonnegative since $\vec{c_1}\in \cX_{h,k}$. Thus, $\vec{c_2}-\vec{u}$
has nonnegative integer coordinates summing $h-h_1$, and so $\vec r \in \cX_{h-h_1,k}$.

That is, the solutions to $\vec{c_2}-\vec{c_1}=\vec y$ are in 1-1 correspondence with the elements of $\cX_{h-h_1,k}$. We now have
\[ \#\left\{(\vec{c_1},\vec{c_2}) \in \cX_{h,k}^2 : \vec{c_2}-\vec{c_1}=\vec y\right\}
 = |\cX_{h-h_1,k}|\]
and consequently
\[|hA| = \big(\text{number of $\equiv$-classes}\big) = \big(\text{number of $\cong$-classes}\big) = |\cX_{h,k}|-|\cX_{h-h_1,k}|,\]
as claimed.
\end{proof}

\subsection{Proof of Lemma~\ref{lem:all minima happen}}
\label{subsec:they happen}

\begin{proof}[Proof of Lemma~\ref{lem:all minima happen}]
Consider the set the set $A=\{0,1,(a-1)(b-1)+1,(a-1)(b-1)+a\}$, where $b\ge a\ge 2$. The $\cL$ lattice has first minimizer is $\vec{y_1}\coloneqq \langle 1-a,a-1,1,-1\rangle$ (with norm $2a$) and the second minimizer is $\vec{y_2}\coloneqq \langle 0,1,-b,b-1 \rangle$ (with norm $2b$). The resulting $\cL$ has minima $2a,2b$.

To see that the first and second minimizers are as claimed, we observe that the vector space perpendicular to $\vec 1$ and $\vec a$ has dimension 2, and $\vec{y_1},\vec{y_2}$ are linearly independent and therefore span the space in question (over the reals). We are then tasked with identifying all real numbers $\alpha,\beta$ with $\alpha\vec{y_1}+\beta\vec{y_2} \in \ZZ^4$. As
\[\alpha\vec{y_1}+\beta\vec{y_2}
  = \langle \alpha(1-a),   \alpha(a-1)+\beta,   \alpha-\beta b,   -\alpha+(b-1)\beta\rangle,\]
the first and second components being integers forces $\beta\in\ZZ$, from which the third component being integral forces $\alpha\in \ZZ$. Ergo, $\vec{y_1},\vec{y_2}$ are a \emph{lattice} basis for the lattice $\cL$.

Thus, the $L^1$-norm of a typical member of $\cL$ has the form
\begin{equation}\label{eq:normform}
| \alpha(1-a)| + |\alpha(a-1)+\beta| + |\alpha-\beta b| +|\alpha-(b-1)\beta|.
\end{equation}
For each fixed $\beta$, the norm form~\eqref{eq:normform} is a piecewise linear function of $\alpha$ whose minimum happens at one of the values
\[\alpha=0, \qquad \alpha=-\beta/(a-1),  \qquad \alpha=\beta b, \qquad  \alpha=(b-1)\beta.\]
Thus, the minimum norm is positive and is one of
  \[2b\beta ,  \qquad 2 \beta  \left(\frac{1}{a-1}+b\right),  \qquad 2 \beta  ((a-1) b+1), \qquad  2 \beta  ((a-1) (b-1)+1),\]
which is at least $2b$ in all cases. If $\beta=0$, without loss of generality $\alpha>0$, and the norm form simplifies to $2a\alpha$. This is minimized with $\alpha=1$, which gives us the first minimum of $2a$ and the first minimizer as $\vec{y_1}$. The second minimizer must be linearly independent of the first, i.e., $\beta\neq 0$, and from the above computation we now know that the second minimizer is $2b$, achieved by $\vec{y_2}$.

This proves Lemma~\ref{lem:all minima happen} for $k=4$. For larger $k$, we need only adjoin a sufficiently quickly growing sequence to $A$. As a specific instance of ``sufficiently quick'', it suffices to take $a_{i+1}>(h_2-1)\cdot a_i$ for $4\le i <k$.
\end{proof}

\subsection{Proof of Corollary~\ref{cor:they all happen}}
This is a consequence of Theorem~\ref{thm:main} with Lemma~\ref{lem:all minima happen}. With $b\ge a \ge 2$, the set $A=\{0,1,(a-1)(b-1)+1,(a-1)(b-1)+a\}\cup\{a_5,\dots,a_k\}$, where $a_{i+1}=2b\cdot a_i$ for $4\le i < k$  has first minimizer is $\langle 1-a,a-1,1,-1\rangle$ (with norm $2a$) and second minimizer is $\langle 0,1,-b,b-1 \rangle$ (with norm $2b$). The resulting coefficient lattice $\cL$ has minima $2a,2b$. For a given $h,j$ with $1\le j\le h$, choose any $b>h,b\ge a$ and $a=h-j+1\ge 2$ and Theorem~\ref{thm:main} now says
\[|hA| = \binom{h+k-1}{k-1}-\binom{h-a+k-1}{k-1}.\]

\section{Types of Tables}
\subsection{Some Notation for ``Additive'' Combinatorial Number Theory}
Let $G$ be a group (for example, $\RR$), and let $\varphi:G^h \to G^i$ be any function (for example, $\sigma_h(\vec x) = x_1+\dots+x_h$). Let $X\subseteq G$ (for example, $X=\NN$, the positive integers). If $i=1$, we define
\begin{align*}
\varphi X & \coloneqq \{\varphi(\vec g) : \vec g \in X^h\} \\
\cR_X(\varphi,k) & \coloneqq \{ |\varphi A | : |A|=k, A\subseteq X\} \\
m_X(\varphi,k) & \coloneqq \min \{|A| : A \subseteq X, |A|=k\} \\
M_X(\varphi,k) & \coloneqq \max \{|A| : A \subseteq X, |A|=k\}
\end{align*}

If $A \subseteq X$ has $|A|=k$ and $|\varphi A|=M_X(\varphi,k)$, then we call $A$ a $\varphi$-Sidon set. If $A \subseteq X$ has $|A|=k$ and $|\varphi A| = m_X(\varphi,k)$, then we call $A$ a $\varphi$-progression.

When $X$ is clear from the context, it is omitted from the notation. When $\varphi(\vec a)=a_1+\cdots+a_h$, we abbreviate it in the notation as simply $h$: the set $hA$ is the $h$-fold sumset of $A$ and $\cR_{\NN}(h,k)$ is the set of sizes of $h$-fold sumsets of sets of $k$ positive integers.

\[
\begin{array}{ccccccc}
\varphi(\vec a) & X & m_X(\varphi,k) & M_X(\varphi,k) & \cR_X(\varphi,k) & \varphi\text{-Sidon} & \varphi\text{-progression} \\ \hline
a_1+a_2 & \ZZ & 2k-1 & \binom{k+1}2 & [m,M] &\text{Sidon set} & \text{arithmetic prog.} \\
a_1-a_2 & \ZZ & 2k-1 & k^2-k+1 & \text{unknown} & \text{Sidon set} & \text{arithmetic prog.} \\
a_1 a_2 & \NN & 2k-1 & \binom{k+1}2 & [m,M] & \text{multiplicative Sidon set} & \text{geometric prog.} \\
a_1+a_2+a_3 & \ZZ & 3k-2 & \binom{k+2}{3} & \text{unknown} & B_3\text{-set} & \text{arithmetic prog.} \\
a_1+\dots+a_h & \ZZ & hk-h+1 & \binom{h+k-1}{k-1} & \text{unknown} & B_h\text{-set} & \text{arithmetic prog.}\\
a_1+2a_2 & \ZZ & 3k-2 & k^2 & \text{unknown} &\text{unnamed} & ???
\end{array}
\]

There are interesting questions with $i\ge 2$. For example, with $\varphi(x_1,x_2)=(x_1+x_2,x_1-x_2)$, one is led to questions about the relative sizes of sumsets and difference sets, for example~\cite{nathanson2007MSTD}. The work~\cite{2007authors5} concerns $\varphi(x_1,x_2)=(u_1x_1+u_2x_2, v_1x_1+v_2x_2)$ in full generality. The function $\varphi(x_1,x_2)=(x_1+x_2,x_1x_2)$ leads one to the infamous Sum-Product Conjecture~\cite{nathanson1997sumproduct,obryant2025visualizingsumproductconjecture}.

\subsection{Types of Tables}
If $G$ is an orderable group and $\varphi:G^h\to G$, we can denumerate the elements of $A$ as $\{a_1<\cdots<a_k\}$, and define the $\varphi$-table as the function $T$ from $[k]^h \to G$ with
\[T(\vec x) =\varphi(a_{x_1},a_{x_2},\dots,a_{x_h}).\]
For example, with $\varphi(\vec x)=x_1+x_2$, the table is $T(i,j)=a_i+a_j$.

We define the (weak) \emph{type} of the $\varphi$-table to be the equivalence relation on $[k]^h$ induced by $\varphi$, i.e., we have $\vec x \overset{\varphi,A}{\equiv} \vec y$ if and only if $T(\vec x)=T(\vec y)$.

We set $\cT_X(\varphi,k)$ to be the set of all types of $\varphi$-tables for $A\subseteq X$ with $|A|=k$.

We note that $|\varphi A|$ is the number of equivalence classes of the type of the $\varphi$-table, so that if $A$ and $B$ have the same type of $\varphi$-table, then $|\varphi A|=|\varphi B|$.

We comment that ``type'' as we have defined it here is somewhat weaker than the closely related ``type'' in~\cite{obryant2025visualizingsumproductconjecture}, and enumerated in~\seqnum{A378609}. There, ``type'' includes an ordering of the equivalence classes. The version in this work is easier to work with theoretically, while the additional information in the ordering is computationally useful.

\begin{question}
How many types of addition table are there? That is, what is $|\cT_{\ZZ}(h,k)|$?
\end{question}

\section{An Application of Types}
Nathanson~\cite{nathanson2025compressioncomplexitysumsetsizes} asked if $\cR_{\NN}(h,k)=\cR_{\RR}(h,k)$. We provide the stronger equality of types.
\begin{theorem}\label{thm:NvsR2}
$\displaystyle \cT_{\NN}(h,k)=\cT_{\RR}(h,k)$. In particular, $\cR_{\NN}(h,k)=\cR_{\RR}(h,k)$
\end{theorem}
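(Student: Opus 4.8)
The plan is to prove the two inclusions separately. Since $\NN\subseteq\RR$, every integer set is in particular a real set, so $\cT_{\NN}(h,k)\subseteq\cT_{\RR}(h,k)$ is immediate; and once the type-sets agree, $\cR_{\NN}(h,k)=\cR_{\RR}(h,k)$ follows because $|hA|$ is the number of classes of the type, hence an invariant of the type. The content is therefore the reverse inclusion $\cT_{\RR}(h,k)\subseteq\cT_{\NN}(h,k)$, which I would obtain by a rational-approximation argument inside a suitable rational subspace of $\RR^k$.

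First I would observe that the type of the addition table is governed by a finite pattern of exact linear equalities. Encoding each $\vec x\in[k]^h$ by its count vector $\vec{c}(\vec x)\in\cX_{h,k}$, so that $T(\vec x)=\vec{c}(\vec x)\cdot\vec a$, we have $\vec x\overset{h,A}{\equiv}\vec y$ if and only if $\bigl(\vec{c}(\vec x)-\vec{c}(\vec y)\bigr)\cdot\vec a=0$. Thus the type depends on $A$ only through which elements $\vec z$ of the finite difference set $D\coloneqq\{\vec c-\vec d:\vec c,\vec d\in\cX_{h,k}\}$ satisfy $\vec z\cdot\vec a=0$. Every $\vec z\in D$ satisfies $\vec z\cdot\vec 1=0$, and for each index $i$ the ``adjacent-gap'' vector (with $-1$ in position $i$, $+1$ in position $i+1$, and $0$ elsewhere) lies in $D$, since it equals $\bigl((h-1)\,\vec e_i+\vec e_{i+1}\bigr)-h\,\vec e_i$; I will use this to recover the ordering of the set I construct.

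Now fix $A\subseteq\RR$ with $\vec a=\langle a_1,\dots,a_k\rangle$ and $a_1<\cdots<a_k$, and partition $D=D_0\sqcup D_1$ according to whether $\vec z\cdot\vec a=0$. The goal is to produce $\vec b\in\ZZ^k$ realizing the \emph{same} partition and satisfying $b_1<\cdots<b_k$. Set $W\coloneqq\{\vec w\in\RR^k:\vec z\cdot\vec w=0\text{ for all }\vec z\in D_0\}$. Because $D_0\subseteq\ZZ^k$, the space $W$ is cut out by integer linear equations and is therefore a \emph{rational} subspace, and by construction $\vec a\in W$. The finitely many strict conditions $\operatorname{sgn}(\vec z\cdot\vec w)=\operatorname{sgn}(\vec z\cdot\vec a)$ for $\vec z\in D_1$ are satisfied by $\vec a$ (which lies off each hyperplane $\vec z^{\perp}$, $\vec z\in D_1$) and define an open subset of $W$ containing $\vec a$. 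Since a rational subspace has dense rational points, I can choose $\vec b_0\in W\cap\QQ^k$ in this open set, clear denominators to get $\vec b\in\ZZ^k$, and finally translate by a large multiple of $\vec 1$; because $\vec z\cdot\vec 1=0$ for all $\vec z\in D$, this last step moves every coordinate into $\NN$ without changing any value $\vec z\cdot\vec b$. By construction $\vec z\cdot\vec b=0$ exactly for $\vec z\in D_0$, and the sign conditions on the adjacent-gap vectors force $b_1<\cdots<b_k$, so $B=\{b_1,\dots,b_k\}\subseteq\NN$ has $|B|=k$ and exactly the same type as $A$.

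The step I expect to be the crux is the middle one: certifying that the real witness $\vec a$ occupies a point of $W$ at which each forbidden hyperplane $\vec z^{\perp}$ ($\vec z\in D_1$) meets $W$ properly, so that a rational perturbation of $\vec a$ within $W$ neither destroys an imposed relation (membership $\vec b\in W$ handles $D_0$ automatically) nor creates a spurious one (the open sign conditions handle $D_1$). The essential input is that $W$ is genuinely rational, which is forced by $D_0\subseteq\ZZ^k$; this is precisely what lets density of $\QQ$-points convert the real vector $\vec a$ into a positive-integer vector of the same type.
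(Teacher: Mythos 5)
Your proof is correct, but it takes a genuinely different route from the paper's. The paper proves $\cT_{\RR}(h,k)\subseteq\cT_{\NN}(h,k)$ by simultaneous Diophantine approximation: after normalizing $X=\{0=x_1<\cdots<x_k=1\}$, a pigeonhole argument on the fractional parts $\{qq_0x_i\}$ produces a dilation $Q$ so that each $Qx_i$ lies within $1/(2h)$ of an integer $a_i$; the accumulated errors are then too small, relative to the integrality of the $a_i$ in one direction and to $\sep_h(X)/2$ in the other, either to destroy an exact coincidence of $h$-fold sums or to create a new one. You replace this analytic approximation with linear algebra over $\QQ$: the type is exactly the zero-pattern of $\vec a$ against the finite integer set $D$ of differences of count vectors, the vectors realizing that pattern form a rational subspace $W\ni\vec a$, and density of $W\cap\QQ^k$ in $W$ together with openness of the finitely many strict sign conditions lets you replace $\vec a$ by a rational, then integral (clearing denominators), then positive (translating by a multiple of $\vec 1$, which is orthogonal to all of $D$) vector with the same pattern. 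Your use of the adjacent-gap vectors $\bigl((h-1)\vec e_i+\vec e_{i+1}\bigr)-h\vec e_i\in D$ to force $b_1<\cdots<b_k$ correctly handles the one real subtlety, namely that the type refers to the increasing enumeration of the set. What the paper's route buys is effectivity: it bounds the constructed integers explicitly (roughly $q_0(k-2)^{2h}$ with $q_0\approx 2/\sep_h(X)$), which fits the experimental theme of the paper, and the same approximation machinery is reused in the paper's subsequent sum-versus-product theorem. What your route buys is economy and generality: no separation constant or error bookkeeping is needed, and the argument works verbatim for any $\varphi$ whose coincidences are governed by integer linear relations, being an instance of the standard fact that a finite system of linear equations and strict inequalities with integer coefficients has a real solution if and only if it has a rational one.
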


\begin{definition}
For $\varphi:\RR^h \to \RR$, the separation of a set $A$ is
\[\sep_\varphi(X)\coloneqq \min \left\{\left|\phi(\vec a)-\phi(\vec b)\right| : \vec a,\vec b \in X^h, \varphi(\vec a) \neq \varphi(\vec b)\right\}.\]
If $\varphi(\vec x)=\sigma_h(\vec x) = x_1+\cdots+x_h$, we write ``$h$'' in place of ``$\varphi$'' in the notation, and note that $\sep_1(X)\ge \sep_2(X) \ge \cdots$.
\end{definition}

\begin{proof}[Proof of Theorem~\ref{thm:NvsR2}.]
For $h=1$ this is trivial, so we take $h\ge 2$. Let $X\subseteq\RR$ with $|X|=k$. We build a set $A\subseteq\NN_0$ with $|A|=k$ and $A,X$ having the same type of $h$-fold addition table. By affine invariance, we may assume that $X=\{0=x_1<\cdots < x_k=1\}$.
Let $q_0\in\NN$ be sufficiently large that $\sep_h(X)\cdot q_o \ge 2$.

Consider the sequence of vectors $d_q=\langle \{qq_0x_2\},\ldots,\{qq_0x_{k-1}\}\rangle$ of fractional parts of dilations of $x_2,\dots,x_{k-1}$. Our pigeons are $d_q$ for $0 \le q \le (k-2)^{2h}$. Our pigeonholes are the $(k-2)$-fold cartesian products of $\left[\frac i{2h},\frac{i+1}{2h}\right)$ with $0\le i < 2h$. We have $(k-2)^{2h}+1$ pigeons, and $(k-2)^{2h}$ pigeonholes, so there are $q',q''$ with
\[ 0 \le q' < q'' \le (k-2)^{2h},\]
and $d_{q'},d_{q''}$ in the same pigeonhole. That is, with $Q\coloneqq q''q_0-q'q_0\in[q_0,q_0(k-2)^{2h}]$ and $a_i$ the integer nearest $Q x_i$, we have
\[ -\frac 1{2h} < Q x_i -a_i < \frac 1{2h}, \qquad q_0 \le Q \le q_0(k-2)^{2h}\]
simultaneously for $1\le i \le k$.
Set $\epsilon_i\in(-1/(2h),1/(2h))$ so that $Qx_i=a_i+\epsilon_i$. Notice $\epsilon_1=\epsilon_k=0$.

We claim that $A=\{a_1,\dots,a_k\}$ has $|A|=|X|$ and $A,X$ have the same type of addition table. Since $x_1<\dots<x_k$ and $Q\ge q_0 \ge 1$, we clearly have $a_1\le \cdots \le a_k$. More precisely, we have \[a_{i+1}-a_i=Q(x_{i+1}-x_i)+(\epsilon_i-\epsilon_{i+1})
\ge \sep_1(X)Q -\frac1h \ge \sep_h(X)q_0 -\frac12 > 1\] so that $a_{i+1}>a_i$. In particular, $|A|=|X|$.

We will argue that the $h$-type of $X$ is the same as the $h$-type of $A$, whence $|hX|=|hA|$. First, assume that $(b_1,\dots,b_h)\overset{h,X}{\equiv} (c_1,\dots,c_h)$, so that
\[x_{b_1}+\dots+x_{b_h}=x_{c_1}+\dots+x_{c_h}.\]
One therefore has
\[a_{b_1}+\cdots a_{b_h}=a_{c_1}+\dots+a_{c_h}+\left(\epsilon_{c_1}+\dots+\epsilon_{c_h}\right)-\left(\epsilon_{b_1}+\dots+\epsilon_{b_h}\right).\]
From the defintion of $\epsilon_i$, we have
\[-1<\left(\epsilon_{c_1}+\dots+\epsilon_{c_h}\right)-\left(\epsilon_{b_1}+\dots+\epsilon_{b_h}\right)<1\]
and by the integrality of the $a_i$, this must be 0. That is,
\[a_{b_1}+\cdots a_{b_h}=a_{c_1}+\dots+a_{c_h}\]
and consequently $(b_1,\dots,b_h) \overset{h,A}{\equiv} (c_1,\dots,c_h)$.

Now, assume that $(b_1,\dots,b_h) \overset{h,A}{\equiv} (c_1,\dots,c_h)$, from which we have
\[a_{b_1}+\cdots a_{b_h}=a_{c_1}+\dots+a_{c_h}.\]
As $a_i = Qx_i-\epsilon_i$, we have
\[x_{b_1}+\dots+x_{b_h}=x_{c_1}+\dots+x_{c_h}- \left(\frac{\epsilon_{b_1}+\dots+\epsilon_{b_h}-\epsilon_{c_1}-\dots-\epsilon_{c_h}}{Q} \right).\]
As
\[\left| \frac{\epsilon_{b_1}+\dots+\epsilon_{b_h}-\epsilon_{c_1}-\dots-\epsilon_{c_h}}{Q} \right| < \frac{1}{Q} \leq \frac{\sep_h(X)}{2},\]
it must be that
\[x_{b_1}+\dots+x_{b_h}=x_{c_1}+\dots+x_{c_h}.\]
Thus, $(b_1,\dots,b_h) \overset{h,X}{\equiv} (c_1,\dots,c_h)$.
\end{proof}

\begin{theorem}
Let $\sigma(\vec x)=\sum_{i=1}^h x_i$ and $\pi(\vec x) = \prod_{i=1}^h x_i$. We have
\[\cT_{\NN}(\sigma,k) = \cT_{\NN}(\pi,k).\]
In particular, $\cR_{\NN}(\sigma,k) = \cR_{\NN}(\pi,k).$
\end{theorem}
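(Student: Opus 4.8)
The plan is to exploit the strictly increasing isomorphism between the multiplicative semigroup of positive reals and the additive group of reals furnished by the logarithm (with inverse given by exponentiation), which converts $\pi$-tables into $\sigma$-tables and back, and then to invoke Theorem~\ref{thm:NvsR2} to move between $\RR$ and $\NN$ where needed.

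For the inclusion $\cT_{\NN}(\sigma,k)\subseteq \cT_{\NN}(\pi,k)$, I would work entirely inside $\NN$. Given $A=\{a_1<\cdots<a_k\}\subseteq\NN$, set $B\coloneqq\{2^{a_1},\dots,2^{a_k}\}$. Because $x\mapsto 2^x$ is strictly increasing, $B$ is a set of $k$ positive integers with $2^{a_1}<\cdots<2^{a_k}$, so the $i$-th smallest element of $B$ is $2^{a_i}$ and the indexing of $B$ agrees with that of $A$. For index tuples $\vec x,\vec y\in[k]^h$, the identity $2^{a_{x_1}}\cdots 2^{a_{x_h}}=2^{a_{x_1}+\cdots+a_{x_h}}$ shows the $\pi$-table entries of $B$ coincide exactly when the corresponding $\sigma$-table entries of $A$ coincide. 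Hence the $\pi$-table of $B$ realizes the type of the $\sigma$-table of $A$, so that type lies in $\cT_{\NN}(\pi,k)$.

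For the reverse inclusion $\cT_{\NN}(\pi,k)\subseteq \cT_{\NN}(\sigma,k)$, I would first pass to the reals. Given $A\subseteq\NN$, the set $\log A\coloneqq\{\log a_1,\dots,\log a_k\}\subseteq\RR$ is again strictly increasing and indexed compatibly, and satisfies $\log a_{x_1}+\cdots+\log a_{x_h}=\log\!\left(a_{x_1}\cdots a_{x_h}\right)$. Thus the $\sigma$-table of $\log A$ has exactly the type of the $\pi$-table of $A$, placing that type in $\cT_{\RR}(\sigma,k)$. Applying Theorem~\ref{thm:NvsR2}, namely $\cT_{\RR}(\sigma,k)=\cT_{\NN}(\sigma,k)$, deposits the type in $\cT_{\NN}(\sigma,k)$.

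The two inclusions give $\cT_{\NN}(\sigma,k)=\cT_{\NN}(\pi,k)$, and the equality $\cR_{\NN}(\sigma,k)=\cR_{\NN}(\pi,k)$ follows at once since $|\varphi A|$ equals the number of equivalence classes of the type. The one genuinely nontrivial ingredient, and the main obstacle, is the reverse inclusion: taking logarithms naturally lands in $\RR$ rather than $\NN$, so realizing an arbitrary product-coincidence pattern over the integers by a sum-coincidence pattern over the integers is precisely what forces the use of Theorem~\ref{thm:NvsR2}. The inclusion proved via exponentiation, by contrast, is elementary because exponentiation carries integers to integers and needs no approximation.
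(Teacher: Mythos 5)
Your proof is correct and takes essentially the same route as the paper: the forward inclusion via $S\mapsto\{2^s:s\in S\}$ is identical, and for the reverse inclusion the paper likewise passes to $\{\log_2 p : p\in P\}$ and returns to integers by the scale-and-round-to-nearest-integer argument. The only difference is packaging: the paper re-runs that approximation argument inline (``the proof goes through as above'') rather than citing Theorem~\ref{thm:NvsR2} as a black box, whereas your direct invocation of $\cT_{\RR}(\sigma,k)=\cT_{\NN}(\sigma,k)$ is a cleaner, equally valid way to accomplish the same transfer.
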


\begin{proof}
Suppose that $S$ is a $k$-element set of positive integers, and let $P\coloneqq \{2^s : s \in S\}$. Then $a_1+\dots+a_h = b_1+\dots+b_h$ if and only if $2^{a_1}\cdots 2^{a_h}= 2^{b_1}\cdots 2^{b_h}$. Thus every type in $\cT_{\NN}(\sigma,k)$ is also in $\cT_{\NN}(\pi,k)$.

Now, suppose that $P$ is a set of positive integers with $|P|=k$. We must exhibit a set $A$ of positive integers whose $\sigma$-type is the same as the $\pi$-type of $P$.

Clearly, the $\pi$-type of $P$ is the same as the $\sigma$-type of $L_1 \coloneqq \{\log_2 p : p \in P\}$, but $L$ is not typically a set of integers. For $c>0$, we set $L_c \coloneqq \{c \log_2 p : p\in P\}$, which has the same $\sigma$-type as $L_1$ and $\pi$-type as $P$. We will show that it is possible to take $c$ so that $A_c \coloneqq \{c+[c \log_2 p] : p\in P\}\subseteq \NN$, where $[x]$ is the integer nearest to $x$, has the same $\sigma$-type as $L_c$.

If $c$ is sufficiently large, then clearly $A_c$ will be a set of $|P|$ distinct positive integers. As in the above argument, we can take $c$ both arbitrarily large and so that  $\{c+[c\log_2 p]: p \in P\}$ has arbitrarily large separation, and so that each $c+[c\log p]$ is within $1/(2h)$ of an integer. The proof goes through as above.

\end{proof}

\section{Experimental Thoughts}
\subsection{Nathanson's Experiment}\label{sec:NathansonExperiment}
Consider the following experiment conducted by Nathanson~\cite{nathanson2025triangulartetrahedralnumberdifferences}. We start by generating $10^7$ random subsets of $\{1,2,\dots,1000\}$ size $4$. We have $|10A|$ taking $213$ distinct values between $31$ and $286$, inclusive. Roughly $78\%$ of them are $B_{10}$-sets, i.e., $|hA|=\binom{10+4-1}{4-1}=286$. Of the $2\,202\,261$ other sets in our sample, we plot the proportion that have each possible size $|hA|$.
\begin{center}
\begin{tikzpicture}
\begin{axis}[
    width=0.95\linewidth,
    height=5cm,
    ybar,
    bar width=0.8,
    xlabel={$|hA|$ values},
    ylabel={probability ($\%$)},
    xmin=30.5,
    xmax=290,
    ymin=0,
    ymax=20,
    grid style=dashed,
    xtick={286, 266, 251, 230, 202, 166, 121, 31},
    tick align=outside,
    enlargelimits=0.05,
]

\addplot[fill=blue!60, draw=blue!80] coordinates {
(31,0.001862)
(40,0.002952)
(41,0.001181)
(49,0.005585)
(51,0.001181)
(57,0.005131)
(59,0.001408)
(61,0.0006811)
(65,0.005948)
(66,0.002906)
(67,0.001044)
(68,0.001862)
(71,0.001271)
(72,0.005721)
(73,0.0004087)
(74,0.003224)
(77,0.001226)
(78,0.001498)
(79,0.004041)
(81,0.007038)
(83,0.002679)
(84,0.001271)
(85,0.007084)
(87,0.001498)
(89,0.004813)
(91,0.006312)
(92,0.0009990)
(93,0.003451)
(96,0.01303)
(97,0.002180)
(98,0.0008173)
(100,0.0009990)
(101,0.006130)
(102,0.007538)
(103,0.0006357)
(104,0.001181)
(105,0.009309)
(106,0.002952)
(107,0.0006357)
(108,0.0007719)
(109,0.01281)
(110,0.002134)
(111,0.003042)
(112,0.006766)
(113,0.003814)
(115,0.01099)
(116,0.0009990)
(117,0.01081)
(118,0.002089)
(119,0.007674)
(120,0.01521)
(121,3.563)
(122,0.0009990)
(123,0.006039)
(125,0.001544)
(126,0.007765)
(127,0.001181)
(128,0.004268)
(129,0.001635)
(130,0.002588)
(131,0.01244)
(132,0.002543)
(133,0.001408)
(134,0.003042)
(135,0.01417)
(137,0.002997)
(138,0.002634)
(139,0.002452)
(140,0.01131)
(141,0.008400)
(142,0.0004995)
(144,0.005903)
(145,0.001226)
(146,0.01158)
(147,0.01671)
(148,0.007038)
(149,0.005131)
(150,0.01285)
(151,0.005494)
(152,0.001771)
(153,0.002952)
(154,0.01453)
(155,0.0009082)
(156,0.01480)
(157,0.003678)
(158,0.004223)
(159,0.01408)
(160,0.001226)
(161,0.01680)
(162,0.01848)
(163,0.005040)
(164,0.009263)
(165,0.02684)
(166,6.229)
(167,0.003042)
(168,0.01485)
(169,0.001635)
(170,0.002180)
(171,0.005086)
(172,0.002043)
(173,0.008900)
(174,0.005086)
(175,0.01103)
(176,0.002452)
(177,0.01167)
(178,0.001635)
(179,0.002815)
(180,0.01162)
(181,0.006039)
(182,0.01535)
(183,0.006493)
(184,0.001317)
(185,0.006266)
(186,0.02143)
(187,0.001953)
(188,0.002588)
(189,0.01303)
(190,0.01194)
(191,0.01158)
(192,0.01122)
(193,0.01231)
(194,0.007265)
(195,0.03024)
(196,0.003542)
(197,0.01648)
(198,0.02211)
(199,0.003042)
(200,0.02062)
(201,0.03891)
(202,6.939)
(203,0.002997)
(204,0.01367)
(205,0.0009082)
(206,0.006947)
(207,0.01753)
(208,0.002724)
(209,0.006766)
(210,0.03505)
(211,0.008945)
(212,0.004223)
(213,0.009490)
(214,0.007901)
(215,0.009399)
(216,0.01244)
(217,0.02275)
(218,0.001907)
(219,0.01539)
(220,0.02625)
(221,0.01494)
(222,0.02279)
(223,0.01308)
(224,0.005176)
(225,0.03515)
(226,0.04872)
(227,0.01208)
(228,0.03297)
(229,0.06530)
(230,10.91)
(231,0.02329)
(232,0.001771)
(233,0.003179)
(234,0.002906)
(235,0.01044)
(236,0.01153)
(237,0.02257)
(238,0.01358)
(239,0.002997)
(240,0.02007)
(241,0.02661)
(242,0.02157)
(243,0.01421)
(244,0.01035)
(245,0.01389)
(246,0.06475)
(247,0.05908)
(248,0.001635)
(249,0.01666)
(250,0.07315)
(251,8.946)
(252,0.02647)
(253,0.02130)
(254,0.001816)
(255,0.02843)
(256,0.06707)
(257,0.003769)
(258,0.02675)
(259,0.007765)
(260,0.002724)
(261,0.05208)
(262,0.1306)
(263,0.006266)
(264,0.02402)
(265,0.1597)
(266,15.15)
(267,0.001635)
(268,0.02384)
(269,0.005812)
(270,0.001317)
(271,0.04150)
(272,0.1235)
(273,0.0003179)
(274,0.03337)
(275,0.1478)
(276,12.99)
(277,0.03151)
(278,0.1073)
(280,0.02584)
(281,0.2152)
(282,16.99)
(283,0.006630)
(284,0.1157)
(285,15.12)
};

\node[above] at (axis cs:143.5,2) {\small 45};
\draw[<->, red, thick] (axis cs:121,2) -- (axis cs:166,2);

\node[above] at (axis cs:184,3) {\small 36};
\draw[<->, red, thick] (axis cs:166,3) -- (axis cs:202,3);

\node[above] at (axis cs:216,4) {\small 28};
\draw[<->, red, thick] (axis cs:202,4) -- (axis cs:230,4);

\node[above] at (axis cs:240.5,5) {\small 21};
\draw[<->, red, thick] (axis cs:230,5) -- (axis cs:251,5);

\node[above] at (axis cs:258.5,6) {\small 15};
\draw[<->, red, thick] (axis cs:251,6) -- (axis cs:266,6);

\node[above] at (axis cs:271,7) {\small 10};
\draw[<->, red, thick] (axis cs:266,7) -- (axis cs:276,7);

\node[above] at (axis cs:279,8) {\small 6};
\draw[<->, red, thick] (axis cs:276,8) -- (axis cs:282,8);

\node[above] at (axis cs:292.5,16) {\small 3};
\draw[->, red, thick] (axis cs:290,17) -- (axis cs:283.5,15.5);

\end{axis}
\end{tikzpicture}
\end{center}
\noindent We find that $97\%$ of the $2\,202\,261$ non-$B_{10}$-sets\footnote{It is expected that most of our sample are $B_{10}$-sets: with fixed $h,k$, if $n$ is sufficiently large then almost all sets in $\binom{[n]}{k}$ are $B_h$-sets.} have $|hA|$ among the $9$ sizes
\[285, 282, 276, 266, 251, 230, 202, 166, 121 .\]
The first differences of the list of common values of $|10A|$ that are less than $286$ are
\[45,36,28,21,15,10,6,3,\]
i.e., the triangular numbers. It is this mystery\,---\,the appearance of triangular numbers\,---\,to which we have given a partial explanation in this work.

We note that the partial sums of the triangular numbers are the tetrahedral numbers given by $\binom{t+2}{3}$. Nathanson's experiment indicates that a random $4$-element subset $A$ of $[n]$ is highly likely to have $|hA|=\binom{h+3}{3}$, but if it doesn't then it is highly likely to have $|hA|$ among $\binom{h+3}{3}-\binom{t+2}{3}$, with $1\le t < h$.

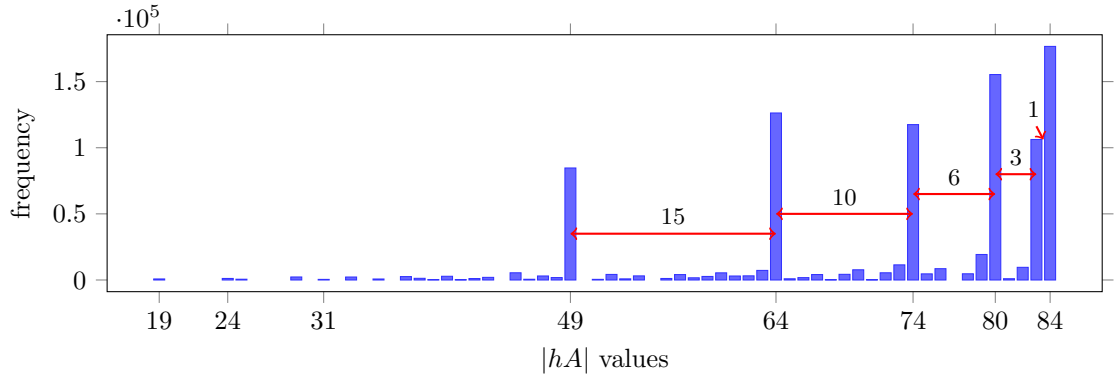
\begin{figure}
\begin{center}
\begin{tikzpicture}
\begin{axis}[
    width=0.95\linewidth,
    height=5cm,
    ybar,
    bar width=0.8,
    xlabel={$|hA|$ values},
    ylabel={frequency},
    xmin=18.5,
    xmax=84.5,
    ymin=0,
    grid style=dashed,
    xtick={84,80,74,64,49,19,24,31},
    tick align=outside,
    enlargelimits=0.05,
]

\addplot[fill=blue!60, draw=blue!80] coordinates {
(19, 782)
(24, 1156)
(25, 578)
(29, 2275)
(31, 455)
(33, 2244)
(35, 748)
(37, 2579)
(38, 1260)
(39, 315)
(40, 2806)
(41, 272)
(42, 1088)
(43, 1981)
(45, 5444)
(46, 544)
(47, 3031)
(48, 1836)
(49, 84693)
(51, 476)
(52, 4200)
(53, 840)
(54, 3098)
(56, 1138)
(57, 4090)
(58, 1648)
(59, 2678)
(60, 5398)
(61, 3010)
(62, 3130)
(63, 7240)
(64, 126278)
(65, 900)
(66, 1780)
(67, 4120)
(68, 280)
(69, 4276)
(70, 7712)
(71, 260)
(72, 5444)
(73, 11432)
(74, 117496)
(75, 4624)
(76, 8548)
(78, 4686)
(79, 19280)
(80, 155350)
(81, 954)
(82, 9570)
(83, 106252)
(84, 176620)
};

\node[above] at (axis cs:82.8,116000) {\small 1};
\draw[->, red, thick] (axis cs:83,116252) -- (axis cs:83.5,106252);

\node[above] at (axis cs:81.5,80000) {\small 3};
\draw[<->, red, thick] (axis cs:80,80000) -- (axis cs:83,80000);

\node[above] at (axis cs:77,65000) {\small 6};
\draw[<->, red, thick] (axis cs:74,65000) -- (axis cs:80,65000);

\node[above] at (axis cs:69,50000) {\small 10};
\draw[<->, red, thick] (axis cs:64,50000) -- (axis cs:74,50000);

\node[above] at (axis cs:56.5,35000) {\small 15};
\draw[<->, red, thick] (axis cs:64,35000) -- (axis cs:49,35000);

\end{axis}
\end{tikzpicture}
\end{center}
\caption{The bar chart of the $6$-fold sumset sizes of all $4$-element subsets $A\subseteqq [70]\coloneqq \{1,2,\dots,70\}$. As Nathanson observed, the distribution is strongly concentrated on sizes of the form $\binom{h+3}{3}-\binom{t+2}{3}$ for $0\le t < h$.}
\end{figure}

\subsection{Distribution of Minima}
This author believes that the appearance of triangular and tetrahedral numbers in Nathanson's experiment is a consequence of Theorem~\ref{thm:main} and the as-yet-unproven reality that roughly $80\%$ of $\binom{[1000]}{4}$ have $h_1> 6$, and of the remaining $20\%$, almost all have $h_2>6$. More generally, the following conjecture combined with Theorem~\ref{thm:main} would be a satisfactory explanation.
\begin{conjecture}
Fix $h\ge3$, and let $\epsilon>0$. There is an $n_0$ such that for all $n\ge n_0$, at least $(1-\epsilon)\binom{n}{4}$ of the subsets of $\binom{[n]}{4}$ have $h_1> h$, and of the sets with $h_1\le h$, the proportion with $h_2\le h$ is at least $1-\epsilon$.
\end{conjecture}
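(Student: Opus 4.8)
The plan is to recast both assertions as counts of $4$-subsets of $[n]$ admitting short integer relations. A set $A=\{a_1<\cdots<a_4\}$ fails to be a $B_h$-set exactly when $\cL$ contains a nonzero $\vec y=\langle y_1,y_2,y_3,y_4\rangle$ with $\sum_i y_i=0$, $\vec y\cdot\vec a=0$, and $\|\vec y\|_1\le 2h$; this is precisely $h_1\le h$. The event named in the second clause, $h_2\le h$, is the existence of \emph{two linearly independent} such $\vec y$. Since $h_1\le h_2$ we have $\{h_2\le h\}\subseteq\{h_1\le h\}$, so the conditional proportion in the second clause is well defined, and my goal is to determine the frequency of that exact event.

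First I would prove the first assertion. The set $Y_h\coloneqq\{\vec y\in\ZZ^4:\sum_i y_i=0,\ 0<\|\vec y\|_1\le 2h\}$ is finite with $|Y_h|=O(h^3)$ (it is the intersection of an $L^1$-ball of radius $2h$ with a $3$-dimensional hyperplane). For each fixed $\vec y\in Y_h$ the condition $\vec y\cdot\vec a=0$ is a single nontrivial linear equation, hence is met by at most $C n^3$ quadruples with $a_1<\cdots<a_4\le n$. A union bound over $Y_h$ gives
\[
\#\{A\in\tbinom{[n]}{4}:h_1\le h\}\ \le\ |Y_h|\cdot C n^3\ =\ O(h^3 n^3),
\]
which is an $O(h^3/n)$ fraction of $\binom{n}{4}$. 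Thus for $n\ge n_0(h,\epsilon)$ at least $(1-\epsilon)\binom{n}{4}$ of the subsets have $h_1>h$, which is the first assertion.

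For the second clause I would run the analogous count for the exact event $h_2\le h$. If $h_2\le h$ then $\vec a$ is annihilated by an independent pair $\vec y,\vec y'\in Y_h$; for fixed such a pair, $\vec a$ lies on a codimension-$2$ rational subspace, so at most $C' n^2$ admissible quadruples occur. Summing over the $O(h^6)$ pairs gives $\#\{h_2\le h\}=O(h^6 n^2)$. For the denominator, the single relation $\vec y=\langle 1,-1,-1,1\rangle\in Y_h$ (valid for $h\ge2$) forces $a_2-a_1=a_4-a_3$, and an elementary count produces $\Omega(n^3)$ such quadruples, so $\#\{h_1\le h\}=\Theta(n^3)$. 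Hence
\[
\frac{\#\{h_2\le h\}}{\#\{h_1\le h\}}\ =\ O\!\left(\frac{h^6}{n}\right)\ \longrightarrow\ 0 .
\]
So the proportion of non-$B_h$-sets with $h_2\le h$ tends to $0$, equivalently the proportion with $h_2>h$ tends to $1$. The second clause therefore holds precisely with $h_2>h$ in place of the printed $h_2\le h$; this is also the inequality forced by Theorem~\ref{thm:main} (whose formula requires $h<h_2$) and by the discussion preceding the conjecture, so I read the printed ``$\le$'' as a sign slip and the display above as the proof of the intended statement.

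The genuine obstacle is not these orders of magnitude but the bookkeeping needed to make them airtight with constants uniform in $h$: one must enumerate $Y_h$ modulo the equivalences identifying relations that define the same hyperplane and discard imprimitive multiples, replace the crude union bound by inclusion--exclusion over the hyperplanes $\{\vec y\cdot\vec a=0\}$ if one wants exact leading constants, and verify that the codimension-$1$ and codimension-$2$ solution counts keep their $\Theta(n^3)$ and $O(n^2)$ orders after imposing $a_1<\cdots<a_4\le n$ and excluding degenerate quadruples where $\gcd_{i<j}(a_j-a_i)$ is large. None of these affects the ratio that drives the conclusion, so the plan yields the (sign-corrected) conjecture; the genuinely delicate extension would be to pin down the precise limiting proportions, and in particular the non-asymptotic $80\%/20\%$ figures at $n=1000$, which requires the full inclusion--exclusion.
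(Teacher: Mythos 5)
The paper does not prove this statement at all: it is offered purely as a conjecture, supported by the experiments of Section 5 and heuristics about random lattices (``the as-yet-unproven reality\dots''), so there is no in-paper proof to compare against and your argument must stand on its own. It does, for the corrected statement, and your diagnosis of the sign slip is the right call: the printed ``$h_2\le h$'' contradicts the sentence immediately preceding the conjecture (``of the remaining $20\%$, almost all have $h_2>6$'') and would make Theorem~\ref{thm:main} inapplicable exactly where the paper needs it; your own count moreover refutes the literal statement, since the conditional proportion of $h_2\le h$ tends to $0$, not $1$. The counting itself is sound. For the first clause: $|Y_h|=O(h^3)$, each nonzero relation $\vec y\cdot\vec a=0$ has some $y_i\neq 0$, so fixing the other three coordinates leaves at most one value of $a_i$, giving at most $n^3$ quadruples per relation and an $O(h^3/n)$ fraction overall. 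For the second clause: $h_2\le h$ is equivalent (by the standard characterization of the second minimum) to two linearly independent members of $Y_h$ annihilating $\vec a$, and for a fixed independent pair a nonsingular $2\times2$ minor pins down two coordinates of $\vec a$ from the other two, so at most $n^2$ quadruples per pair and $O(h^6n^2)$ in total; meanwhile the single relation $\langle 1,-1,-1,1\rangle$ already supplies $\Omega(n^3)$ sets with $h_1\le 2\le h$, so the conditional ratio is $O_h(1/n)$ and the intended conclusion follows for all $n\ge n_0(h,\epsilon)$.

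Your closing paragraph is overcautious: since the conjecture quantifies $h$ and $\epsilon$ before $n_0$, no uniformity in $h$ is needed, and the primitivity/inclusion--exclusion bookkeeping matters only if one wants exact limiting proportions or the non-asymptotic $80\%/20\%$ figures at $n=1000$ (or the paper's empirical $E[h_1]\approx\frac12 n^{1/2}$), none of which the statement asks for. In short, your elementary double count upgrades the paper's conjecture, in its evidently intended form with $h_2>h$, to a theorem---something the paper itself does not do.
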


This is really a question about a ``random lattice'', the lattice perpendicular to $\vec 1$ and $\vec a$. Experiments indicate that if $A$ is chosen uniformly from $\binom{[n]}{4}$, then the expected value of $h_1(A)$ is close to $\frac12 n^{1/2}$, with standard deviation around $\frac15 n^{1/2}$. If $A$ is chosen uniformly from $\binom{[n]}{5}$, the expected value seems to approach $\frac58 n^{1/3}$, with the standard deviation in the neighborhood of $\frac15 n^{1/3}$.

Example: Let $A=\{1,5,96,100\}$. The set $A$ is not a $B_2$-set as $1+100=5+96$. In lattice terms, $h_1=2$ and $\vec y_1 = \langle 1,-1,-1,1\rangle$. However, $A$ does have
\[|hA| = \binom{h+3}{3}-\binom{h+1}{3}\]
for $2 \le h \le 94$. For $h=95$, though, we encounter the new equality:
\[ 95\cdot 96 = 91\cdot 100+4\cdot 5.\]
In lattice terms, $h_2=95$ and $y_2=\langle 0,-4,95,-91\rangle$.

This experimental work would benefit from a positive answer to the question:
\begin{question}
Is there a way to generate $k$-element subsets of $[n]$ that are not $B_h$-sets uniformly that is more efficient than simply generating $k$-element subsets of $[n]$ and testing for the $B_h$-property?
\end{question}

\subsection{Nathanson's Program}
In recent works, Nathanson~\cite{nathanson2025compressioncomplexitysumsetsizes,nathanson2025explicitsumsetsizesadditive,nathanson2025inverseproblemssumsetsizes,nathanson2025problemsadditivenumbertheory,nathanson2025triangulartetrahedralnumberdifferences} has given a program to develop a deeper understanding of $|hA|$, beyond merely the extreme values. The current work fits within his framework.

\paragraph{A gap in the literature.}
In~\cite{nathanson2025compressioncomplexitysumsetsizes}, Nathanson writes ``it is not known if $\cR_{\NN_0}(h,k)= \cR_{\RR}(h,k)$.'' Our Theorem~\ref{thm:NvsR} fills this gap in the literature.

\paragraph{A Problem from Nathanson.}
In~\cite{nathanson2025triangulartetrahedralnumberdifferences}, Problem 8 is to prove that
\[\left\{ \binom{h+3}{3}-\binom{j+2}{3} : 1\le j \le h\right\} \subseteq \cR_{\ZZ}(h,4).\]
This is a consequence of Theorem~\ref{thm:main} with the set $A=\{0,1,(a-1)(b-1)+1,(a-1)(b-1)+a\}$, where $b\ge a\ge 2$. The first minimizer is $\langle 1-a,a-1,1,-1\rangle$ (with norm $2a$) and the second minimizer is $\langle 0,1,-b,b-1 \rangle$ (with norm $2b$). The resulting $\cL$ has minima $2a,2b$. For a given $h,j$ with $1\le j\le h$, choose any $b>h$ and $a=h-j+1$ and Theorem~\ref{thm:main} now says
\[|hA| = \binom{h+3}{3}-\binom{h-a+3}{3}=\binom{h+3}{3}-\binom{j+2}{3}.\]

\paragraph{Another Problem from Nathanson.}
In~\cite{nathanson2025triangulartetrahedralnumberdifferences}, Problem 9 is to ``Discover the pattern of popular sumset sizes for sets of size $k\ge 5$.'' The current work suggests that the popular values will be
\[\left\{ \binom{h+k-1}{k-1} - \binom{j+k-1}{k-1} : 0\le j < h\right\},\]
provided that $n$ is sufficiently large in terms of $h,k$. A modest amount of experimentation agrees with this suggestion.

\section*{Acknowledgements}
This work was inspired by Melvyn B. Nathanson's many articles on the sequence of sumset sizes, a few of the most recent of which are cited in the bibliography. Moreover, the prose has benefited from his questions and suggestions.


\end{document}